\documentclass[11pt]{article}
\usepackage[margin=1in]{geometry}
\usepackage[T1]{fontenc}
\usepackage{lmodern}
\usepackage{microtype}
\usepackage{amsfonts}
\usepackage{amsmath}
\usepackage{amsbsy}
\usepackage{amsxtra}
\usepackage{latexsym}
\usepackage{amssymb}
\usepackage{amsthm}
\usepackage{url}
\usepackage{cite}
\usepackage{enumerate}
\makeatletter
\g@addto@macro\thesection
\makeatother

\newtheorem{theorem}{Theorem}
\newtheorem*{theorem*}{Theorem}

\newtheorem{corollary}{Corollary}
\newtheorem{proposition}{Proposition}
\newtheorem{remark}{Remark}
\newtheorem{definition}{Definition}
\newtheorem{example}{Example}

\DeclareMathOperator{\intr}{int}

\DeclareMathOperator{\argmin}{argmin}

\newcommand{\lng}{\langle}
\newcommand{\rng}{\rangle}
\newcommand{\lf}{\left}
\newcommand{\rg}{\right}

\newcommand{\R}{\mathbb R}

\newcommand{\mc}{\mathcal}

\newcommand{\si}{\sigma}

\newcommand{\Conv}{\mathcal C}

\newcommand{\Kup}{\mathcal K}

\begin{document}

\title{Converses of Moreau's decomposition theorem}
\author{A. B. N\'emeth\\[3pt]
\small Faculty of Mathematics and Computer Science, Babe\c{s}--Bolyai University\\
\small Str. Kog\u{a}lniceanu nr. 1--3, RO-400084 Cluj-Napoca, Romania\\
\small \texttt{nemab@math.ubbcluj.ro}
\and
S. Z. N\'emeth\\[3pt]
\small School of Mathematics, University of Birmingham\\
\small Watson Building, Edgbaston, Birmingham B15 2TT, United Kingdom\\
\small \texttt{s.nemeth@bham.ac.uk}}
\date{}

\maketitle

\begin{abstract}
We study when generalized metric projections generated by strongly quasiconvex functions are polarizable. The point of departure is Moreau's
decomposition theorem, where the Euclidean metric projections onto two mutually polar closed convex cones form complementary retractions whose sum
is the identity. This suggests the following fundamental question: when does a given retraction admit such a complementary polar retraction? We
formulate this notion through polarizable retractions and then specialize it to generalized projections of the form 
$x\mapsto P_\Conv^\varphi(x)$, defined by
minimizing $\varphi(x-y)$ with respect to $y$ over a closed convex set $\Conv$. After recalling basic properties of mutually 
polar retractions and
first-order characterizations of generalized projections, we prove the main result: If $\varphi$ is strongly quasiconvex, the 
generalized projection $P_\Conv^\varphi$ is polarizable if and only if $\Conv$ is a closed convex cone. Thus, polarizability of the projection is
characterized by the conic geometry of its range. Hence, this result can be considered as a converse of Moreau's decomposition theorem. As a consequence, a more specific converse of the same theorem is presented for the Euclidean norm.
\end{abstract}

\noindent\textbf{Keywords:} Moreau's decomposition; metric projection; generalized projection;
strong quasiconvexity; polarizable retractions; convex cones.
\medskip

\section{Introduction}

Duality is one of the basic principles of functional analysis and convex analysis. In an inner product space, duality has a particularly concrete
geometric form because the inner product allows orthogonality, polar cones, and nearest-point projections to interact directly. Metric projection
onto closed convex sets, and especially onto closed convex cones, therefore provides a natural setting in which geometric duality can be expressed analytically in terms of mappings of the ambient space.

A classical manifestation of this principle is Moreau's decomposition theorem. For a closed convex cone and its polar cone in a Hilbert space,
every vector decomposes into two orthogonal components, and these components are precisely the metric projections onto the two cones. In mapping
language, the two projections are retractions, their sum is the identity mapping, and each projection extracts the part of the vector belonging to
one additive term in the polar decomposition. This motivates the notion of a pair of mutually polar retractions.

The mapping viewpoint raises a natural question that is more general than Moreau's original setting. Given a retraction $Q$ of a vector space,
when is its complementary mapping $I-Q$ again a retraction? Equivalently, when can $Q$ be completed to a pair of mutually polar retractions whose
sum is the identity? We call such a retraction \emph{polarizable}. The purpose of this note is to answer this question for a class of generalized
metric projections. Instead of measuring displacement by the Euclidean norm, we consider a strongly quasiconvex, differentiable, nonnegative
function $\varphi$ with $\varphi(0)=0$, and define the $\varphi$-projection onto a closed convex set $\Conv$ by minimizing $\varphi(x-y)$
over $y\in\Conv$. The ordinary Euclidean metric projection is recovered as a special case, but the formulation permits a substantially broader family of projection-type mappings.

Our main result shows that, within this framework, polarizability is governed entirely by the geometry of the target set: the generalized
projection $P_\Conv^\varphi$ is polarizable if and only if $\Conv$ is a closed convex cone. Thus, although the 
projection is generated by the function $\varphi$, the existence of a complementary retraction forces $\Conv$ to be a closed convex cone, yielding
a geometrical structure which resembles Moreau's decomposition. In the particular case of Euclidean projection our main result yields a converse of Moreau's decomposition theorem. Before proving our results, we collect the 
required terminology, elementary facts about mutually polar retractions, and first-order optimality descriptions of
generalized projections.

\section{Terminology}

If $X$ is a real topological vector space, then a convex set $\Conv \subseteq X$ is called \emph{strictly convex} if
\[tu+(1-t)v\in\intr(\Conv),\] for any $u,v\in\Conv$ and any $t\in (0,1)$.
\medskip
	
If $X$ is a real vector space, then a set $\Kup\subseteq X$ is called \emph{convex cone} if it satisfies

\begin{enumerate}[(i)]
	\item $\lambda x\in \Kup$, for all $x\in \Kup$ , $\lambda \in \R_+:=\{t\in\mathbb R:t\ge 0\}$, 
	\item $x+y\in \Kup$, for all $x,y\in \Kup$, 
\end{enumerate}

If $X$ is a Hilbert space with inner product $\langle\cdot,\cdot\rangle$, then the \emph{polar cone} of a closed convex cone 
$\Kup\subseteq X$ is the closed convex cone defined by 

\[\Kup^{\circ}=\{x\in X:\langle x,y\rangle\le0,\,\forall y\in\Kup\}.\]

We have $\Kup^{\circ\circ}=\Kup$, where 
$\Kup^{\circ\circ}:=(\Kup^\circ)^\circ$. The closed convex cones $\Kup,\mathcal{L}:=\Kup^\circ$ are called \emph{mutually 
polar cones}.  

If $X =(X,\|\cdot\|)$ is a Banach space and $\Conv\subseteq X$ a nonempty closed convex cone,
then the mapping
$$P_\Conv x=\argmin \{\|x-y\|:y\in\Conv\}$$
is well defined and unique for each $x\in X$ and it is called \emph{the metric projection of $x$ onto $\Conv$.}

In the most of our consideration
$X= \R^n$ with $\R^n = (\R^n, \langle\cdot,\cdot\rangle)$ the Euclidean $n$-space generated by the orthonormal reference system $e_1,...,e_n$
and the inner product it defines.

A Riesz space is a real vector space $X$ equipped with a partial
order $\leq$ satisfying the compatibility conditions
\[
x \leq y \implies x+z \leq y+z
\qquad \text{for all } x,y,z\in X,
\]
and
\[
x \leq y,\quad \lambda\geq 0
\implies \lambda x\leq \lambda y,
\]
such that every pair $x,y\in X$ has a least upper bound $x\vee y$
and a greatest lower bound $x\wedge y$.

The positive cone of $X$ is
\[
\mathcal{K}:=\{x\in X:x\geq 0\}.
\]
The partial order $\le$ of $X$ is induced by the positive cone of $X$ as follows:
\[
x\leq y \iff y-x\in\mathcal{K}.
\]

A standard example of a Riesz space is $\mathbb{R}^n$ equipped
with the partial order induced by a simplicial cone
$\mathcal{K}\subseteq\mathbb{R}^n$, namely,
\[
x\leq y \iff y-x\in\mathcal{K}.
\]
Recall that a cone $\mathcal{K}\subseteq\mathbb{R}^n$ is called
simplicial if it is generated by $n$ linearly independent vectors
$v_1,\ldots,v_n\in\mathbb{R}^n$, that is,
\[
\mathcal{K}
=\left\{\sum_{i=1}^n\lambda_i v_i:
\lambda_i\geq 0,\ i=1,\ldots,n\right\}.
\]
In particular, the usual componentwise order corresponds to
$\mathcal{K}=\mathbb{R}^n_+$.

\section{Retractions: definition  and examples}\label{mutu}

In this section we suppose that $X$ is a real vector space.

The  mapping $Q:X\to X$ is called \emph{retraction} if  it
is \emph{idempotent}, that is, if $Q^2=Q$.

\begin{definition}\label{polab|}

The retraction $Q: X \to X$ is called \emph{polarizable} if  $I-Q:X \to X$ with $I$ the identity mapping of
$X$ is a retraction.

\end{definition}

\begin{definition}\label{muture} 
The mappings $P,R:\,X\to X$ are called
\emph{mutually polar retractions} if 
\begin{enumerate}[(i)]
	\item $P$ and $R$ are retractions, 
	\item $P+R=I$,
\end{enumerate}
\end{definition}

We will use the notations $PX=\mc P$ and $RX=\mc R$.

 We will say that $P$ and $R$ 
 are \emph{polars of each other}. Thus, $P$ and $R$ are polarizable.
 Also, the sets $\mc P$ and $\mc R$ are said {\em mutually polar}.

The retraction $R$ is polarizable if and only if $R$ and $I-R$ are mutually polar
retractions.

\begin{example}\label{triv}
The mappings $I$ and $0$ are mutually polar retractions.
Hence both are polarizable.
\end{example}
%\end{remark}

\begin{example}\label{ex0}
If $\R^n$ is the Euclidean $n$-space
endowed wih the orhonormal system $e^1,...,e^n$ and the reference system it defines, then putting 
$x=t_1e^1+\dots+t_ne^n$, consider the mappings
$$Qx=|t_1|e^1+\dots+|t_n|e^n$$
and
$$Px=t_1^+e^1+\dots+t_n^+e^n, $$
where $t^+$ denotes the positive part of the real number $t$.

$Q$ and $P$ are both retractions onto the positive orthant $\R^n_+$. 
$I-P$ is retraction too, but $I-Q$ is not. Hence $P$ is polarizable while  $Q$ is not.
\end{example}

\begin{example}\label{eex2}

If $\mc K\subseteq X$ is a closed convex cone in a Hilbert space $X$ with inner product 
$\lng\cdot,\cdot\rng$ and $\mc K^\circ$ is the polar of  $\mc K$,
then  the metric projections $P_\mc K$ and $P_{\mc K^\circ}$ are mutually polar retractions.
This is the consequence of the following decomposition theorem of Moreau (see
\cite{Moreau1962}, \cite{Zarantonello1971}). 

\begin{theorem} [Moreau]\label{moreau}
	Let  $\mc K,\mc L\subseteq X$ be two mutually polar closed 
	convex cones, where $X$ is a Hilbert space with inner product $\lng\cdot,\cdot\rng$. 
	Then, the following statements are equivalent:
	\begin{enumerate}
		\item[(i)] $z=x+y,~x\in \mc K,\,\,~y\in \mc L$ and $\lng x,y\rng=0$,
		\item[(ii)] $x=P_\mc K z$ and $y=P_\mc L z$,
	\end{enumerate}
	where $P_\mc K$ and $P_\mc L$ are the metric projections onto $\mc K$ and $\mc L$, 
	respectively.
\end{theorem}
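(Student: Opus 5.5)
The plan is to use the standard variational characterization of the metric projection onto a nonempty closed convex set in a Hilbert space. For a closed convex $\mc C$ and $z\in X$, the point $x=P_{\mc C}z$ is the unique point of $\mc C$ with
\[
\lng z-x,\,u-x\rng\le 0\quad\text{for all }u\in\mc C .
\]
When $\mc C=\mc K$ is a closed convex cone, we can test this with $u=0$ and $u=2x$, which gives $\lng z-x,x\rng=0$. Once that is known, the inequality reduces to $\lng z-x,u\rng\le 0$ for all $u\in\mc K$, that is, $z-x\in\mc K^\circ=\mc L$. So for cones the projection is characterized by three conditions:
\[
x\in\mc K,\qquad z-x\in\mc L,\qquad \lng z-x,x\rng=0 .
\]
Conversely, any $x$ satisfying these three conditions satisfies the variational inequality, because for $u\in\mc K$ we have $\lng z-x,u-x\rng=\lng z-x,u\rng\le 0$.

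\textbf{(ii)$\Rightarrow$(i).} Set $x=P_{\mc K}z$. By the characterization, $x\in\mc K$, the vector $y':=z-x$ lies in $\mc L$, and $\lng x,y'\rng=0$. It remains to show that $y'=P_{\mc L}z$, which is the given $y$. We check the three conditions for the cone $\mc L$, whose polar is $\mc L^\circ=\mc K^{\circ\circ}=\mc K$:
\begin{itemize}
\item[] $y'\in\mc L$ holds by construction;
\item[] $z-y'=x\in\mc K=\mc L^\circ$;
\item[] $\lng z-y',y'\rng=\lng x,y'\rng=0$.
\end{itemize}
By uniqueness of the projection, $y'=P_{\mc L}z=y$. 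Hence $z=x+y$ with $x\in\mc K$, $y\in\mc L$ and $\lng x,y\rng=0$.

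\textbf{(i)$\Rightarrow$(ii).} Suppose $z=x+y$ with $x\in\mc K$, $y\in\mc L$ and $\lng x,y\rng=0$. Then $z-x=y\in\mc L=\mc K^\circ$ and $\lng z-x,x\rng=0$, so the characterization gives $x=P_{\mc K}z$. Exchanging the roles of the cones and using $\mc L^\circ=\mc K$, the same argument gives $y=P_{\mc L}z$.

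The main point is the reduction of the variational inequality to the three-condition characterization for cones, which relies on the choices $u=0$ and $u=2x$. It also relies on the bipolar identity $\mc K^{\circ\circ}=\mc K$, which the paper assumes. Everything else is bookkeeping together with uniqueness of the projection. As a consequence, $P_{\mc K}+P_{\mc L}=I$, so the two projections are mutually polar retractions, as claimed in Example~\ref{eex2}.
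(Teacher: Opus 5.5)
Your proof is correct and complete, and it works in an arbitrary Hilbert space as the statement requires. The paper gives no proof of this theorem; it only cites \cite{Moreau1962,Zarantonello1971}. So there is no in-paper route to compare against, and what you wrote is the standard argument.

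It is still worth seeing how your argument relates to the tools the paper does use.
\begin{itemize}
\item Your three-condition characterization ($x\in\mc K$, $z-x\in\mc K^\circ$, $\lng z-x,x\rng=0$) is exactly Theorem~\ref{t5} with $\varphi=\|\cdot\|^2$. Here $\nabla\varphi(z-x)=2(z-x)$, and by Remark~\ref{remm1} this $\varphi$ yields the Euclidean projection.
\item You derive it from the variational inequality, which is the special case of Theorem~\ref{c2}, by testing with $u=0$ and $u=2x$. This closely parallels the necessity part of Theorem~\ref{footet}. There, testing at $0$ gives one inequality and the polar condition gives the other, which together force $\lng\nabla\varphi(x-P_\Conv^\varphi x),P_\Conv^\varphi x\rng=0$.
\item Theorems~\ref{c2} and~\ref{t5} are stated only in $\R^n$, whereas your derivation goes through verbatim in any Hilbert space.
\end{itemize}

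Your one outside input, $\mc K^{\circ\circ}=\mc K$, also follows from your characterization. If $z\in\mc K^{\circ\circ}$ and $x=P_{\mc K}z$, then $\|z-x\|^2=\lng z-x,z\rng-\lng z-x,x\rng=\lng z-x,z\rng\le 0$, so $z=x\in\mc K$. The whole argument then rests only on existence and uniqueness of nearest points in closed convex subsets of a Hilbert space.
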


\end{example}
 
\begin{example}\label{riesz}

If $(X,\mc K)$ is a Riesz space with positive cone $\mc K$ then $P$ and $R$ defined by
$Px=x^+$ and $Rx=-x^-$ are mutually polar retractions. Here $\mc P=\mc K$ and $\mc R = -\mc K$.

\end{example}

%%%%%%%%%%%%%%%%%%%%%%%%%%%%%%%%%%%%%%%%%%%%%%%%%%%%%%%%%%%%%%%%%%%%%%%

 \section{Properties of the mutually polar retractions}

The following proposition is a slight modification of Lemma 4.1 in \cite{NN}. We give a proof because the result is used below.

\begin{proposition}\label{kov}
	If $X$ is a vector space and $P,R:X\to X$ are mutually polar retractions, then
\begin{enumerate}[(i)]
	\item $PR=RP=0.$
	\item $P0=R0 =0$;
	\item  $\mc P \cap \mc R = \{0\}$ and $\mc P + \mc R= X$,
	\item $Px-Rx= 0$ if and only if $x=0$.
\end{enumerate}
    \end{proposition}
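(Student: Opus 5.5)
The plan is to work purely algebraically from the two defining identities $P^2=P$, $R^2=R$ and $P+R=I$. Writing $R=I-P$ and expanding gives
\[
R^2=(I-P)(I-P)=I-2P+P^2 .
\]
One has to be careful here, because $P$ need not be linear. Composition still distributes on the right over pointwise sums, $(A+B)C=AC+BC$, but $P(I-P)$ cannot be expanded as $P-P^2$. So I will rewrite every identity pointwise rather than in operator algebra.

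For (i), fix $x$ and set $u=Rx$. Since $R$ is idempotent, $Ru=u$. Then $Pu=u-Ru=0$, that is, $PRx=0$. Symmetrically, set $v=Px$; then $Pv=v$, so $Rv=v-Pv=0$, that is, $RPx=0$. This step uses only idempotency together with $P+R=I$.

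For (ii), I have two routes. One is to apply (i) at $x=0$ after showing $P0$ is a fixed point of $R$; this is circular, so I use a more direct argument. Let $a=P0$ and $b=R0$, so $a+b=0$. Idempotency gives $Pa=a$, and (i) gives $Ra=RP0=0$. Hence $a=Pa+Ra=Pa=a$, which is trivially true, so this yields nothing. Instead I use (i) directly: $0=RP0=Ra$, and $a=Pa+Ra=Pa$. Also $b=-a$, and $Pb=PR0=0$. I therefore need to relate $P(-a)$ to $Pa$, which nonlinearity seems to forbid.

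The better route is to observe that $a\in\mc P$ and $b\in\mc R$ with $a+b=0$, so (ii) reduces to showing $\mc P\cap(-\mc R)=\{0\}$, and that still needs something extra. I expect this to be the main obstacle. My first attempt will be: $R0=R(R0)=Rb$, and also $Pb=0$, so $b=Pb+Rb=Rb$, again consistent but not decisive. If no purely algebraic contradiction emerges, I will check whether the proof in \cite{NN} uses an additional property, and state it if needed.

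For (iii), take $x\in\mc P\cap\mc R$. Then $x=Px$ and $x=Rx$, so $x=Px+Rx=2x$, hence $x=0$. Moreover $X=\mc P+\mc R$ holds trivially, since $x=Px+Rx$.

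For (iv), suppose $Px=Rx=:w$. Then $w\in\mc P\cap\mc R$, so $w=0$ by (iii), and therefore $x=Px+Rx=0$. Conversely, if $x=0$, (ii) gives $P0=R0=0$. So the converse direction of (iv) depends on (ii), which confirms that (ii) is the crux.
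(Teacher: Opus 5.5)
There is a genuine gap. Your arguments for (i), for $\mathcal P+\mathcal R=X$, for the inclusion $\mathcal P\cap\mathcal R\subseteq\{0\}$, and for the ``only if'' half of (iv) are correct. However, the proposal never proves (ii). It therefore also leaves two further things open:
\begin{enumerate}[(a)]
\item the ``if'' half of (iv), which, since $P0+R0=0$, is equivalent to (ii);
\item the inclusion $0\in\mathcal P\cap\mathcal R$, which is needed for equality in (iii).
\end{enumerate}
Your suspicion that an extra hypothesis might be required is unfounded. Trying to relate $P(-a)$ to $Pa$ is a dead end, and no symmetry or linearity is needed: (ii) follows from the definition in one line.

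The missing idea is to apply idempotence at a point where (i) has already produced a zero, so that the output $0$ is fed back into the same map. You had $Ra=0$ for $a=P0$. Then
\[
R0=R(Ra)=R^2a=Ra=0,
\]
and consequently $P0=0-R0=0$. Equally, you had $Pb=0$ for $b=R0$, which gives $P0=P(Pb)=Pb=0$ directly. This is exactly the step in the paper's proof: with $x=P0$ one gets $Rx=0$, and then $Rx=R^2x=R0$.

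Once (ii) is established, $0=P0\in\mathcal P$ and $0=R0\in\mathcal R$ complete (iii), and $P0-R0=0$ completes (iv). The membership $0\in\mathcal P\cap\mathcal R$ also follows already from (i), since $0=P(Rx)\in\mathcal P$ and $0=R(Px)\in\mathcal R$.
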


\begin{proof}
	$\,$

	\begin{enumerate}[(i)]
		\item From Definition \ref{muture}, it follows that
			\(PR=(P+R)R-R^2=R-R^2=0.\) Hence, by swapping $P$ and
			$R$, we also have $RP=0$.
		\item Suppose that $P0=x$. Then, by using item (ii) of Definition \ref{muture}, we obtain
			\begin{eqnarray*}
				x=P0=P^20=Px\implies x=Px+Rx=x+Rx\implies \\ 
				Rx=0
				\implies Rx=R^2x = R0\implies 
			P0=x=Px+Rx=P0+R0 = 0\implies \\R0=0.
			\end{eqnarray*}
		\item Let any $z\in\mc P\cap\mc R$. Hence, there exists
			$x,y\in X$ such that $z=Px=Ry$, which, by using 
			item (i), implies $z=Px=P^2x=PRy=0$. On the other hand, 
			$0\in\mc P\cap\mc R$ because of item (ii). The second
			equality follows from item (ii) of Definition \ref{muture}.
		\item The equality $P0-R0=0$ comes from item (ii). Suppose that $Px-Rx=0$. 
			Hence, by using the first equality of item (iii), we
			obtain $Px=Rx\in\mc P\cap\mc R=\{0\}$. Therefore, $x=Px-Rx=0$. 
	\end{enumerate}
\end{proof}
%%%%%%%%%%%%%%%%%%%%%%%%%%%%%%%%%%%%%%%%%%%%%%%%%%%%%%%%%
%%%%%%%%%%%%%%%%%%%%%%%%%%%%%%%%%%%%%%%%%%%%%%%%%%%%%%%%%%%%%%%%%%%%%%%%%%%%%%%%%%%

\section{The generalized metric projection:}

 A continuous function   $\varphi:\R^n \to \R$  is called \emph{ strongly quasiconvex}  if for each $\lambda \in (0,1)$
and each $x, y \in \R^n $ with  $x\neq y$ we have 
$$
\varphi ( \lambda x+(1-\lambda)y) < \max \{ \varphi (x), \varphi (y)\}.
$$
 
It follows from the definition that the function $\varphi$ is  strongly quasiconvex if and only if all of its  
nonempty sublevel sets $\{x\in \R^n : \varphi (x)\leq a \}$ for $a \in \R$,  are  
strictly convex (remember that $ \varphi $ is continuous). Moreover, for an arbitrary
nonempty convex set $\Conv$ ,
for each $x\in \R^n$ the  function  $\varphi_x(y) = \varphi(x-y)$ has only one minimizer in $\Conv$,
 see Theorem~\(3.5.9\) of \cite{Baz2006}.

\begin{definition}\label{foodef1}

Suppose that $\varphi$ is a strongly quasiconvex differentiable
nonnegative function with $\varphi(0)=0.$
Suppose $\Conv \subseteq \R^n$ is a nonempty closed convex set.
Then, for any  $x\in \R^n$ there is a well defined
mapping $P_\Conv^\varphi:\R^n\to \Conv$ called \emph{the $\varphi$-projection of $x$ onto $\Conv$}
given by the formula:
\begin{equation}\label{fookepl}
	P_\Conv^\varphi x=\argmin \{\varphi(x-y): y\in \Conv\}.
\end{equation}
%Hence, $P$ depends on $\varphi$ and $\Conv $ and if necessary 
%we will also use the notation $P_{\Conv}^{\varphi}.$

The \emph{distance of $x$ from $\Conv$} is defined by
\begin{equation}\label{foodef2} 
d(x,\Conv)= \varphi(x-Px).
\end{equation}

\end{definition}

\begin{remark}\label{remm1}
It is immediate from the formula (\ref{fookepl}) that the same generalized projection
is defined by many functions. For instance if $f: \R_+ \to \R_+$ is a continuous,
increasing, strongly quasiconvex function with $f(0)=0$,
then $P_\Conv^{\varphi} = P_\Conv^{f\circ \varphi}$.
\end{remark}

\begin{remark}\label{philosoph}

The above defined generalized projection is a natural 
generalization of the Euclidean metric projection
since this is precisely the projection
with respect to \[\si(x) :=\|x\|= \lf(x_1^2+...+x_n^2\rg)^{\frac 12}.\] 

\end{remark}

The next result is Corollary 2 of \cite{FN}.

\begin{theorem}\label{c2}
	Let $\Conv\subseteq \R^n$ be a nonempty closed convex set, $x\in\mathbb R^n$ and $p\in\Conv$. If $\varphi$ is 
	differentiable in $x-p$, then the following statements are equivalent:
\begin{enumerate}
	\item[(i)] $p=P^\varphi_\Conv x $; 
	\item[(ii)] $\langle\nabla\varphi(x-p),q-p\rangle\le 0$, for all $q\in\Conv$. 
\end{enumerate}
\end{theorem}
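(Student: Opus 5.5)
The proof has two directions. The forward one is a one-sided directional derivative argument. The reverse one is a separation argument using convexity of the strict sublevel sets of $\varphi$. Throughout I write $w:=x-p$, and for $q\in\Conv$ I write $z:=x-q$, so that $q-p=w-z$.

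\emph{(i)$\Rightarrow$(ii).} Fix $q\in\Conv$. By convexity $p+t(q-p)\in\Conv$ for $t\in[0,1]$. Consider
\[
g(t)=\varphi\bigl(x-p-t(q-p)\bigr)=\varphi\bigl(w+t(z-w)\bigr),\qquad t\in[0,1].
\]
It attains its minimum at $t=0$ because $p=P^\varphi_\Conv x$. Since $\varphi$ is differentiable at $w$, the one-sided derivative exists and gives
\[
0\le g'(0^+)=\langle\nabla\varphi(w),z-w\rangle=-\langle\nabla\varphi(x-p),q-p\rangle,
\]
which is exactly (ii).

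\emph{(ii)$\Rightarrow$(i).} Argue by contradiction. Suppose some $q\in\Conv$ satisfies $\varphi(z)<\varphi(w)$. Condition (ii) says $\langle\nabla\varphi(w),z-w\rangle\ge 0$. I will show that in fact $\langle\nabla\varphi(w),z-w\rangle<0$.

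First, consider the strict sublevel set $S=\{u:\varphi(u)<\varphi(w)\}$. It is convex because $\varphi$ is quasiconvex, and open because $\varphi$ is continuous; it contains $z$.

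Second, $S$ lies in the closed half-space $\{u:\langle\nabla\varphi(w),u-w\rangle\le 0\}$. Indeed, for $u\in S$ the function $t\mapsto\varphi(w+t(u-w))$ is $\le\varphi(w)$ on $[0,1]$ by quasiconvexity, so its right derivative at $0$ is $\le 0$.

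Third, if $\nabla\varphi(w)\neq 0$, an open set contained in a closed half-space lies in the corresponding open half-space. Hence $\langle\nabla\varphi(w),z-w\rangle<0$, contradicting (ii). So $\varphi(x-q)\ge\varphi(x-p)$ for all $q\in\Conv$. By the uniqueness of the minimizer (strong quasiconvexity, Theorem 3.5.9 of \cite{Baz2006}), $p=P^\varphi_\Conv x$.

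\emph{Main obstacle: the degenerate case $\nabla\varphi(x-p)=0$.} Then (ii) holds trivially for every $\Conv$. The implication can only survive if a zero gradient forces $x-p$ to be a global minimizer of $\varphi$, i.e. $x-p=0$ with $\varphi(x-p)=0$, in which case $x=p\in\Conv$ and $p=P^\varphi_\Conv x$ trivially.

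Strong quasiconvexity alone does not give this. For instance, $\varphi(t)=(t-1)^3+1$ for $t\ge 0$, extended evenly to $t<0$, is strictly increasing in $|t|$, hence strongly quasiconvex, and $\varphi'(1)=0$. Taking $\Conv=[0,\infty)$ and $x=1+p$ with $p>0$ satisfies (ii), although the projection of $x$ is $x$ itself, not $p$. So (ii)$\Rightarrow$(i) can fail.

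I would therefore proceed as follows. First check whether the setting of \cite{FN} excludes critical points of $\varphi$ other than its minimizer $0$; this holds, e.g., for norms or for functions of the form $f\circ\|\cdot\|$ with $f'>0$ on $(0,\infty)$. If it does, add that hypothesis (equivalently, pseudoconvexity of $\varphi$), and the separation argument above completes the proof. If it does not, the statement needs this extra hypothesis to be correct.
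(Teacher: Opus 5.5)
The paper does not prove this statement. It quotes it as Corollary~2 of \cite{FN}, so there is no in-paper argument to compare with, and your proposal has to be judged on its own.

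Both directions you prove are correct. The direction (i)$\Rightarrow$(ii), via the one-sided derivative of $t\mapsto\varphi(x-p-t(q-p))$ at $0$, is fine and needs only differentiability at $x-p$. The direction (ii)$\Rightarrow$(i) is also fine when $\nabla\varphi(x-p)\neq 0$ (or $x=p$). There, quasiconvexity along segments puts the open set $\{u:\varphi(u)<\varphi(x-p)\}$ inside the closed half-space determined by $\nabla\varphi(x-p)$, hence inside its interior. That contradicts (ii) for any $q\in\Conv$ with $\varphi(x-q)<\varphi(x-p)$.

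Your treatment of the degenerate case is also right, and the defect lies in the statement, not in your proof. Under the hypotheses in force here (strongly quasiconvex, differentiable, nonnegative, $\varphi(0)=0$), (ii)$\Rightarrow$(i) fails whenever $\varphi$ has a critical point $w\neq 0$. The statement as written therefore cannot be proved. It needs the extra assumption $\nabla\varphi(u)\neq 0$ for $u\neq 0$, which, as you note, amounts to pseudoconvexity for differentiable quasiconvex $\varphi$.

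Your counterexample needs one repair. The even extension of $t^3-3t^2+3t$ has one-sided derivatives $\pm 3$ at $0$, so it is not differentiable at $0$. The pointwise hypothesis in the statement allows this, but Definition~\ref{foodef1} requires $\varphi$ to be differentiable everywhere. A smooth example is $\varphi(u)=(\|u\|^2-1)^3+1$ on $\R^n$:
\begin{itemize}
\item it is a strictly increasing function of the strictly convex function $\|u\|^2$, hence strongly quasiconvex;
\item it is $C^\infty$ and nonnegative, with $\varphi(0)=0$;
\item its gradient $\nabla\varphi(u)=6(\|u\|^2-1)^2u$ vanishes on the unit sphere.
\end{itemize}
Take $\Conv=\R^n$, $p=0$ and $\|x\|=1$. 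Then (ii) holds trivially, while $P^\varphi_\Conv x=x\neq p$.

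The paper itself only uses the direction (i)$\Rightarrow$(ii) of this theorem, and your argument establishes that direction unconditionally. However, the necessity proof of Theorem~\ref{footet} ends by invoking the ``if'' part of Theorem~\ref{t5}. That part is the conic analogue of (ii)$\Rightarrow$(i), and it fails for the same $\varphi$: take $\Kup=\R^n$, $p=0$, $\|x\|=1$. So the same extra hypothesis is needed there as well.
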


For convex cones, Theorem 5 of \cite{FN} gives the following characterization.

\begin{theorem}\label{t5}
	Let \(\Kup \subseteq \mathbb{R}^n\) be a closed convex cone,
	\(x \in \mathbb{R}^n\), and \(p \in\Kup\). Suppose that
	\(\varphi\) is differentiable at \(x-p\). Then
	\(
	p=P_\Kup^\varphi x 
	\)
	if and only if
	\[
	\nabla\varphi(x-p) \in \Kup^\circ
	\qquad\text{and}\qquad
	\langle \nabla\varphi(x-p),p\rangle = 0.
	\]
\end{theorem}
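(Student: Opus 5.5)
The plan is to derive this characterization directly from Theorem~\ref{c2}, which already describes $p=P^\varphi_\Kup x$ by the variational inequality
\[
\langle\nabla\varphi(x-p),q-p\rangle\le 0 \qquad\text{for all } q\in\Kup .
\]
The hypotheses of Theorem~\ref{c2} are met, since a closed convex cone is a nonempty closed convex set ($0\in\Kup$) and $\varphi$ is differentiable at $x-p$. So the task reduces to showing that, when $\Kup$ is a cone, this variational inequality is equivalent to the two conditions $\nabla\varphi(x-p)\in\Kup^\circ$ and $\langle\nabla\varphi(x-p),p\rangle=0$. Throughout I write $g:=\nabla\varphi(x-p)$.

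For the direction ``$\Rightarrow$'', assume $p=P^\varphi_\Kup x$, so that $\langle g,q-p\rangle\le 0$ for every $q\in\Kup$. The cone property (i) with $\lambda=0$ and $\lambda=2$ gives $0\in\Kup$ and $2p\in\Kup$.
\begin{enumerate}[(a)]
\item Taking $q=0$ yields $-\langle g,p\rangle\le 0$.
\item Taking $q=2p$ yields $\langle g,p\rangle\le 0$.
\end{enumerate}
Together these give $\langle g,p\rangle=0$. Substituting this back, the inequality becomes $\langle g,q\rangle\le 0$ for all $q\in\Kup$, which is exactly $g\in\Kup^\circ$ by the definition of the polar cone.

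For the direction ``$\Leftarrow$'', assume $g\in\Kup^\circ$ and $\langle g,p\rangle=0$. Then for any $q\in\Kup$,
\[
\langle g,q-p\rangle=\langle g,q\rangle-\langle g,p\rangle=\langle g,q\rangle\le 0 .
\]
Thus statement (ii) of Theorem~\ref{c2} holds, and therefore $p=P^\varphi_\Kup x$.

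I do not expect a genuine obstacle. The only point needing care is the choice of the test points $q=0$ and $q=2p$, which is where the conic structure is used: this is what upgrades the one-sided variational inequality to the complementarity equality $\langle g,p\rangle=0$. All the analytic content, including existence and uniqueness of the minimizer and the first-order optimality condition, is already contained in Theorem~\ref{c2}.
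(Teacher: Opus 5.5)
The paper does not prove this statement; it imports it as Theorem~5 of \cite{FN}. Your derivation from Theorem~\ref{c2} is correct and is the natural one. The cone enters only through $0,2p\in\Kup$. Squeezing $\lng\nabla\varphi(x-p),p\rng$ between the inequalities from $q=0$ and $q=2p$ is essentially the device the paper uses in the necessity part of Theorem~\ref{footet}, where $z=0$ in \eqref{ineq2} is played off against \eqref{ineq1}. Compared with the citation, your route shows that Theorem~\ref{t5} is a short corollary of Theorem~\ref{c2}, so only one external result is needed. It also makes visible that each direction of Theorem~\ref{t5} is exactly as strong as the corresponding direction of Theorem~\ref{c2}.

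That dependence matters for the ``if'' direction. Your ``only if'' half is unconditional, because (i)$\Rightarrow$(ii) of Theorem~\ref{c2} is just the one-sided directional-derivative condition at a minimizer. But (ii)$\Rightarrow$(i) fails for strong quasiconvexity in the sense defined in the paper whenever $\nabla\varphi$ vanishes off the origin. For instance, take
\[
\varphi(z)=\int_0^{\|z\|}s(s-1)^2\,ds=\tfrac14\|z\|^4-\tfrac23\|z\|^3+\tfrac12\|z\|^2 .
\]
It is $C^1$, nonnegative, and vanishes only at $0$. It is strongly quasiconvex, since its radial profile is strictly increasing and $\|\lambda u+(1-\lambda)v\|<\max\{\|u\|,\|v\|\}$ for $u\neq v$. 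Yet $\nabla\varphi(z)=(\|z\|-1)^2z$ vanishes on the unit sphere. With $\Kup=\R^n$, so $\Kup^\circ=\{0\}$, any $p=x-e$ with $\|e\|=1$ satisfies both conditions of Theorem~\ref{t5}, while $P^\varphi_\Kup x=x\neq p$.

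So the ``if'' direction, which the paper invokes in the necessity part of Theorem~\ref{footet}, needs an extra hypothesis such as $\nabla\varphi(z)\neq 0$ for $z\neq 0$ (true for $\varphi=\|\cdot\|^2$). Under it, for $x\neq p$ the quasiconvex map $y\mapsto\varphi(x-y)$ has nonzero gradient at $p$ and is therefore pseudoconvex there. Hence (ii) of Theorem~\ref{c2} does imply (i), while the case $x=p$ is trivial, and your reduction goes through verbatim. This is a defect of the statements as formulated here, not of your reasoning. Still, your remark that all the analytic content sits in Theorem~\ref{c2} is only as reliable as that imported result.
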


%%%%%%%%%%%%%%%%%%%%%%%%%%%%%%%%%%%%%%%%%%%%%%%%%%%%%%%%%%%%%%%%%%%%%%%%%%%%%%%%%%%%%%%%%%%%%%%%%%%%%%%%%%%%%%%

\subsection{The polarisability of the metric projection}

We now present our main result. This result can be considered as a kind of general 
converse of Moreau's decomposition Theorem. Although its sufficiency part was already established in \cite{NN}, we include the entire
proof here for completeness. 

\begin{theorem}\label{footet}

If $\varphi:\mathbb R^n\to\mathbb R^n$ is a strongly  quasiconvex, differentiable and nonnegative function with 
$\varphi(0)=0$, then the $\varphi$-projection $P^{\varphi}_\Conv$ onto the closed convex set $C\subseteq\R^n$ 
is polarizable if and only if $\Conv\subseteq\mathbb R^n$ is a closed convex cone.

\end{theorem}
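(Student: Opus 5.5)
The plan is to verify both directions using only the first-order characterizations, Theorem~\ref{c2} and Theorem~\ref{t5}. Throughout, write $P=P^\varphi_\Conv$. By Definition~\ref{polab|}, $P$ is polarizable exactly when $(I-P)^2=I-P$. Since $(I-P)^2x=(x-Px)-P(x-Px)$, this is equivalent to
\[
P(x-Px)=0\qquad\text{for all }x\in\R^n .
\]
Both directions are about this identity.

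\emph{Sufficiency.} Let $\Conv=\Kup$ be a closed convex cone. Fix $x$ and put $p=Px$ and $g=\nabla\varphi(x-p)$. By Theorem~\ref{t5}, $g\in\Kup^\circ$ and $\langle g,p\rangle=0$. Now apply Theorem~\ref{t5} again, to the point $z=x-p$ and the candidate $0\in\Kup$. We have $\nabla\varphi(z-0)=g\in\Kup^\circ$ and $\langle g,0\rangle=0$, so $P z=0$. This is the required identity, so $I-P$ is a retraction.

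\emph{Necessity.} Assume $P(x-Px)=0$ for all $x$. Then $0\in\Conv$, since $0$ is a value of $P$. Fix $x$ and set $p=Px$, $g=\nabla\varphi(x-p)$. Applying Theorem~\ref{c2} at the point $z=x-p$ with the projection $0$ gives $\langle g,q\rangle\le0$ for all $q\in\Conv$. Applying Theorem~\ref{c2} at $x$ with the projection $p$, and taking $q=0$, gives $\langle g,-p\rangle\le 0$. Taking $q=p$ in the first inequality gives $\langle g,p\rangle\le 0$, so together $\langle g,p\rangle=0$.

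Now let $\Kup$ be the closure of the conic hull $\{\lambda c:\lambda\ge0,\ c\in\Conv\}$. It is a closed convex cone, because the conic hull of a convex set containing $0$ is a convex cone and closure preserves this. A linear functional that is $\le0$ on $\Conv$ is $\le0$ on $\Kup$, so $g\in\Kup^\circ$. Since $p\in\Conv\subseteq\Kup$ and $\langle g,p\rangle=0$, Theorem~\ref{t5} yields $p=P^\varphi_\Kup x$. Hence $P^\varphi_\Conv x=P^\varphi_\Kup x$ for every $x$. Taking $x\in\Kup$ gives $x=P^\varphi_\Kup x=P^\varphi_\Conv x\in\Conv$. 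Thus $\Kup\subseteq\Conv\subseteq\Kup$, and $\Conv$ is a closed convex cone.

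The main obstacle is the necessity direction, specifically identifying $P_\Conv$ with the projection onto the generated cone. The key trick is to use the optimality condition at $x-Px$, where the projection is $0$. It is also worth noting that this argument never needs $\nabla\varphi(x-p)\neq0$; that fact could be awkward to establish for a merely strongly quasiconvex $\varphi$.
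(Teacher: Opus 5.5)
Your argument is correct, and the necessity direction is essentially the paper's. The paper also applies Theorem~\ref{c2} at $x-P^\varphi_\Conv x$ (whose projection is $0$) to get $\nabla\varphi(x-P^\varphi_\Conv x)\in\Kup^\circ$. It then applies Theorem~\ref{c2} at $x$ with $q=0$ to obtain $\langle\nabla\varphi(x-P^\varphi_\Conv x),P^\varphi_\Conv x\rangle=0$, and invokes Theorem~\ref{t5} to conclude $P^\varphi_\Conv=P^\varphi_\Kup$. You are a bit more careful there in two ways:
\begin{itemize}
\item The paper uses the conic hull $\{\lambda v: v\in\Conv,\ \lambda\ge 0\}$ without taking its closure. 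It need not be closed, while Theorem~\ref{t5} is stated for closed cones.
\item The paper passes from $P^\varphi_\Conv=P^\varphi_\Kup$ to $\Conv=\Kup$ without comment, whereas you justify $\Kup\subseteq\Conv$ explicitly.
\end{itemize}

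The genuine difference is in sufficiency. The paper argues directly from the minimization problem. Because $\Conv+\Conv\subseteq\Conv$, every point $P^\varphi_\Conv x+v$ with $v\in\Conv$ is feasible, so $\varphi(x-P^\varphi_\Conv x)\le\varphi(x-P^\varphi_\Conv x-v)$. Hence $0$ minimizes $v\mapsto\varphi(x-P^\varphi_\Conv x-v)$ over $\Conv$, and uniqueness of the minimizer gives $P^\varphi_\Conv(x-P^\varphi_\Conv x)=0$. That route needs only closure under addition, $0\in\Conv$, and uniqueness of minimizers; it does not use differentiability or polar cones at all.

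Your route applies Theorem~\ref{t5} twice, observing that the single vector $g=\nabla\varphi(x-p)$ certifies both $p=P^\varphi_\Kup x$ and $0=P^\varphi_\Kup(x-p)$. It is short and puts both directions on the same first-order footing. However, it relies on differentiability and on the imported cone characterization, so it is less elementary and less general than the paper's argument.
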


\begin{proof}
	$\,$ 
	\medskip

%We begin by emphasising the relation:
%
%$$
% (I-P^{\varphi}_\Conv)^2 = I-P^{\varphi}_\Conv\iff P^{\varphi}_\Conv(I-P^{\varphi}_\Conv) = 0 .
%$$

\textbf{The necessity:}
	Suppose that $\varphi$ is strongly quasiconvex and let $\Conv$ be a closed 
	convex set such $P_\Conv^\varphi$ is polarizable. Then, we need to show that $\Conv$ is a 
	closed convex cone.

	From Proposition \ref{kov} (i) we get
	\begin{equation}\label{ne}
		P_\Conv^\varphi(x - P_\Conv^\varphi x) = 0,
	\end{equation} 
	for all $x \in \mathbb{R}^n$. 

	Hence, equation \eqref{ne} and Theorem \ref{c2} with
	$x - P_\Conv^\varphi x$ in place of $x$ yields

	\begin{equation*}
		\langle \nabla\varphi(x - P_\Conv^\varphi x), z\rangle
		=\langle \nabla\varphi(x - P_\Conv^\varphi x-P_\Conv^\varphi(x - P_\Conv^\varphi x)), 
		z-P_\Conv^\varphi(x-P_\Conv^\varphi x) \rangle \leq 0,
	\end{equation*} 
	\quad \text{for all } $z \in \Conv$.
	Hence, $\nabla\varphi(x - P_\Conv^\varphi x)\in\Kup^\circ$, where $\Kup$ is the cone generated by $\Conv$ (that 
		is, the smallest closed convex cone $\Kup$ containing $\Conv$ which is 
	$\Kup=\{\lambda v:v\in \Conv,\lambda\ge 0\}$).
Taking $z = P_\Conv^\varphi x$, we get

\begin{equation}\label{ineq1} 
	\langle \nabla\varphi(x - P_\Conv^\varphi x), P_\Conv^\varphi x \rangle \leq 0.
\end{equation}

By using Theorem \ref{c2} again, we obtain

\begin{equation}\label{ineq2} 
 \langle \nabla\varphi(x - P_\Conv^\varphi x),z-P_\Conv^\varphi x \rangle \leq 0,
\end{equation}
for any $z\in \Conv$. From equation \eqref{ne} we have $0\in \Conv$. Letting $z=0$ in inequality \eqref{ineq2},
we obtain

\begin{equation}\label{ineq3}
	\langle \nabla\varphi(x - P_\Conv^\varphi x),- P_\Conv^\varphi x \rangle\le 0,
\end{equation}

Thus, from equations \eqref{ineq1} and \eqref{ineq3}, it follows that 
\begin{equation*}\label{ineq4}
	\langle \nabla\varphi(x - P_\Conv^\varphi x), P_\Conv^\varphi x \rangle = 0,
\end{equation*}
where $P_\Conv^\varphi x \in \Conv \subseteq \Kup$, $\nabla\varphi(x - P_\Conv^\varphi x) \in \Kup^{\circ}$. Hence, 
Theorem \ref{t5} yields

\[P_\Conv^\varphi x = P_\Kup^\varphi x.\]

Therefore, $\Conv = \Kup$.
\medskip

\textbf{The sufficiency:}
Since \[(I-P^\varphi_\Conv)(I-P^\varphi_\Conv)=I-P^\varphi_\Conv-P^\varphi_\Conv(I-P^\varphi_\Conv),\] for 
$P^{\varphi}_ \Conv $ with $\Conv$ a closed convex 
cone, we have to show
\begin{equation}\label{foo1}
	P^{\varphi}_\Conv(I-P^{\varphi}_\Conv)=0.%, \quad (I-P^{\varphi}_\Conv)(\R^n)=\ker(P^{\varphi}_\Conv):=\{x\in\R^n:P^{\varphi}_\Conv(x)=0\}.
\end{equation}

Since $\Conv$ is closed under the addition, $P^{\varphi}_\Conv x+v\in \Conv$ for
any $v\in \Conv$ and since $$P^{\varphi}_\Conv x=\argmin\{\varphi (x-u):u\in \Conv\},$$
we have $$ \varphi(x-P^{\varphi}_\Conv x)\leq \varphi(x-P^{\varphi}_\Conv x-v)$$
and since the minimum point of $\varphi$ is unique, it  follows that
$$\argmin \{\varphi(x-P^{\varphi}_\Conv x-v): v\in \Conv\}=0.$$

Hence,
$$ P^{\varphi}_\Conv (x-P^{\varphi}_\Conv x) =\argmin\{\varphi(x-P^{\varphi}_\Conv x-v): v\in \Conv\}=0,$$
according to the above consideration. 

Since $x \in \R^n$ was arbitrarily chosen, we obtain %the first relation in 
\eqref{foo1}.

 The second equality of \eqref{foo1} easily
follows from its first equality. Thus the sufficiency is proved.
\end{proof}

The next corollary is a more explicit converse of Moreau's decomposition theorem and by using a slight adaptation of the ideas in the 
proof of Theorem \ref{footet} it can also be proved in Hilbert spaces. This corollary is an extension of Soltan's Theorem 1.2 in
\cite{Soltan2019}, where $Q$ was also supposed to be a metric projection onto a closed convex set.

\begin{corollary}[\bf Converse of Moreau's decomposition Theorem]
	Let $Q:\R^n\to R^n$ be a retraction, ${\cal C}\subseteq\R^n$ be a closed convex set 
	and $P_{\cal C}$ be the projection onto ${\cal C}$ 
	with respect to the Euclidean norm. If \[x=P_{\cal C} x+Qx, 
	\qquad\forall x\in\R^n,\] 
	then $\mc C$ is a closed convex cone and $Q=P_{\cal D}$, where ${\cal D}$ is the
	polar cone of $\mc C$.
\end{corollary}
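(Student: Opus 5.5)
The plan is to reduce the corollary to Theorem \ref{footet} and then identify $Q$ through Moreau's theorem. The hypothesis $x=P_{\cal C}x+Qx$ says that $Q=I-P_{\cal C}$. Since $Q$ is a retraction, $I-P_{\cal C}$ is a retraction. The metric projection $P_{\cal C}$ is itself a retraction, because $P_{\cal C}y=y$ for $y\in{\cal C}$. Hence $P_{\cal C}$ is polarizable in the sense of Definition \ref{polab|}.

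To invoke Theorem \ref{footet} we have to realize $P_{\cal C}$ as a $\varphi$-projection with $\varphi$ strongly quasiconvex, differentiable, nonnegative and $\varphi(0)=0$. The norm $\sigma(x)=\|x\|$ of Remark \ref{philosoph} is not differentiable at $0$, so I would take instead $\varphi(x)=\|x\|^2$. By Remark \ref{remm1} with $f(t)=t^2$ on $\R_+$, this gives the same projection as $\sigma$, so $P_{\cal C}=P^\varphi_{\cal C}$.

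The function $\varphi$ has the required properties:
\begin{enumerate}[(i)]
\item it is differentiable, with $\nabla\varphi(x)=2x$;
\item it is nonnegative and satisfies $\varphi(0)=0$;
\item it is strongly quasiconvex, because it is strictly convex: for $x\ne y$ and $\lambda\in(0,1)$,
\[\varphi(\lambda x+(1-\lambda)y)<\lambda\varphi(x)+(1-\lambda)\varphi(y)\le\max\{\varphi(x),\varphi(y)\}.\]
\end{enumerate}
The necessity part of Theorem \ref{footet} then yields that ${\cal C}$ is a closed convex cone. This is the only step needing care, namely checking the hypotheses on $\varphi$. I expect no real obstacle, since the squared norm avoids the nondifferentiability of $\sigma$ at the origin.

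It remains to show that $Q=P_{\cal D}$ with ${\cal D}={\cal C}^\circ$. Fix $x$ and put $p=P_{\cal C}x$. Theorem \ref{t5} with $\nabla\varphi(x-p)=2(x-p)$ gives $x-p\in{\cal C}^\circ$ and $\langle x-p,p\rangle=0$. So $x=p+(x-p)$ with $p\in{\cal C}$, $x-p\in{\cal D}$ and $\langle p,x-p\rangle=0$. This is item (i) of Moreau's Theorem \ref{moreau}, applied to the mutually polar cones ${\cal C},{\cal D}$ (recall ${\cal C}^{\circ\circ}={\cal C}$). Item (ii) of that theorem then gives $x-p=P_{\cal D}x$, so $Qx=x-P_{\cal C}x=P_{\cal D}x$ for every $x$. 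This proves the corollary.
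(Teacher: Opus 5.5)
Your proposal is correct and follows the paper's route: $P_{\cal C}$ is polarizable because $Q=I-P_{\cal C}$ is a retraction, Theorem \ref{footet} then forces ${\cal C}$ to be a closed convex cone, and Moreau's theorem identifies $Q$ as $P_{\cal D}$. Your explicit choice $\varphi=\|\cdot\|^2$ supplies the differentiability hypothesis that the paper leaves implicit (the norm $\sigma$ of Remark \ref{philosoph} is not differentiable at $0$), and your detour through Theorem \ref{t5} is valid but unnecessary, since direction (ii)$\Rightarrow$(i) of Theorem \ref{moreau} gives $x=P_{\cal C}x+P_{\cal D}x$ directly.
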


\begin{proof}
	By Definition \ref{muture}, the maps $P_{\cal C},Q$ are mutually polar retractions. Hence, $P_{\cal C}$ is polarizable. Thus, Theorem \ref{footet} implies that 
	${\cal C}$ is a closed convex cone. Let ${\cal D}$ be the polar cone of $C$.
	The equality $P_{\cal C} x+Qx=x$ and Moreau's decomposition Theorem (i.e., Theorem \ref{moreau}) shows 
	that 
	\[Qx=x-P_{\cal C} x=P_{\cal D}x,\] for all $x\in\R^n$. 
\end{proof}

%%%%%%%%%%%%%%%%%%%%%%%%%%%%%%%%%%%%%%%%%%%%%%%%%%%%%%%%%%%%%%%%%%%%%%%%%%%%%%%%%
\section{Final Remarks}

Our main result identifies when a generalized projection $P_{\Conv}^\varphi$ onto a closed convex set 
$\Conv\subseteq\R^n$ with respect to a strongly quasiconvex function $\varphi:\R^n\to\R^n$ has a complementary 
retraction $I-P_{\Conv}^\varphi$: this occurs exactly when the set $\Conv$ is a closed convex cone. In particular, the 
following converse of Moreau's decomposition theorem holds: If two Euclidean projections onto closed convex sets are 
mutually polar retractions, then the two sets are mutually polar closed convex cones.

These conclusions place the main result close in spirit to Moreau's decomposition theorem while also separating what is 
specific to the Euclidean norm and what survives from it for a broader strongly quasiconvex generating function:
Moreau's decomposition theorem identifies the complementary retraction explicitly as the metric projection onto the polar 
cone. In the generalized setting
considered here, the central issue is the existence of the complementary retraction $I-P_\Conv^\varphi$; we
prove that this existence is equivalent to the set $\Conv$ being a closed convex cone, however in general $I-P_\Conv^\varphi$ is not a metric
projection. Still, this can be considered as a general converse of Moreau's decomposition theorem. In the case of the 
Euclidean norm the converse becomes more explicitly related to projections and the corresponding decomposition.

There is also a related rigidity phenomenon in the Banach-space setting. As discussed in \cite{Nemeth2025}, for the
ordinary metric projection $P_\Conv^\varphi$ onto an arbitrary closed convex cone $\Conv$, convexity properties of the 
range of $I-P_\Conv^\varphi$ are
closely tied to the underlying norm being generated by an inner product. This indicates that complementary projection mappings can encode not only properties of the projected set but also geometric information about the ambient space.

It remains of interest to determine whether differentiability of \(\varphi\) can be relaxed and when the complementary retraction admits a geometric description analogous to projection onto a polar cone.

%%%%%%%%%%%%%%%%%%%%%%%%%%%%%%%%%%%%%%%%%%%%%%%%%%%%%%%%%%%%%%%%%%%%%%%%%%%%%%%%%%%%%%%%%%%%%%%%%%%%%%%
\section{Declaration about use of AI}

During the finalisation of this paper, the authors used ChatGPT Plus as an editorial tool. In particular, it was used
to help refine the abstract, the introduction, and the final remarks; and to assist with language and presentation. The mathematical statements, 
arguments, proofs, and conclusions remained under the authors' control, and no new mathematical result was introduced on the
basis of the AI-generated text. All AI-assisted changes were reviewed or updated by the 
authors, who take full responsibility for the content of the paper.

%%%%%%%%%%%%%%%%%%%%%%%%%%%%%%%%%%%%%%%%%%%%%%%%%%%%%%%%%%%%%%%%%%%%%%%%%%%%%%%%%%%%%%%%%%%%%%%%%%%%%%%

\end{document}